\date{\today}
\definecolor{lightgray}{rgb}{0.8, 0.8, 0.8}
\definecolor{darkgray}{rgb}{0.65, 0.65, 0.65}
\newcounter{todocounter}
\theoremstyle{plain}
\newtheorem{theorem}{Theorem}
\newtheorem{openproblem}[theorem]{Open Problem}
\theoremstyle{definition}
\newfont{\footsc}{cmcsc10 at 8truept}
\newfont{\footbf}{cmbx10 at 8truept}
\newfont{\footrm}{cmr10 at 10truept}
\renewenvironment{abstract}%
                {
                  \begin{list}{}%
                     {\setlength{\rightmargin}{1in}%
                      \setlength{\leftmargin}{1in}}%
                   \item[]\ignorespaces\begin{small}}%
                 {\end{small}\unskip\end{list}}
\let\start@align@nopar\start@align
\let\start@gather@nopar\start@gather
\let\start@multline@nopar\start@multline
\long\def\start@align{\par\start@align@nopar}
\long\def\start@gather{\par\start@gather@nopar}
\long\def\start@multline{\par\start@multline@nopar}
\title{\sc Three Coloring via Triangle Counting}
\author{
	Zachary Hamaker%
	\footnote{Hamaker's research was partially supported by the NSF via award number 2054423.}
	\ and\ 
	Vincent Vatter%
	\footnote{Vatter's research was partially supported by the Simons Foundation via award number 636113.}
	\\
	\small Department of Mathematics\\[-0.5ex]
	\small University of Florida\\[-0.5ex]
	\small Gainesville, Florida USA
}
\begin{document}
\maketitle


\pagestyle{main}

\vspace{-0.3in}

\begin{abstract}
In the first partial result toward Steinberg's now-disproved three coloring conjecture, Abbott and Zhou used a counting argument to show that every planar graph without cycles of lengths~$4$ through~$11$ is $3$-colorable. Implicit in their proof is a fact about plane graphs: in any plane graph of minimum degree~$3$, if no two triangles share an edge, then triangles make up strictly less than~$\nicefrac{2}{3}$ of the faces. We show how this result, combined with Kostochka and Yancey's resolution of Ore's conjecture for~$k=4$, implies that every planar graph without cycles of lengths~$4$ through~$8$ is $3$-colorable.
\end{abstract}

%

%
%
%
%

In a 1975 letter, Steinberg asked if a planar graph without $4$- or $5$-cycles is necessarily \mbox{$3$-colorable}~\cite[Problem~9.1]{steinberg:the-state-of-th:}. There was little to no progress on Steinberg's conjecture until~1990. Surely some of this lack of progress was because Steinberg's conjecture is actually false, as established in~2017:

\begin{theorem}[Cohen-Addad, Hebdige, Kr\'{a}l', Li, and Salgado~\cite{CHKLS2016}]
\label{thm:3-color:steinberg:false}
There exists a planar graph without cycles of length $4$ or $5$ that is not $3$-colorable.
\end{theorem}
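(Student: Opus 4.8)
Since Steinberg's conjecture is a universal statement, disproving it amounts to producing a single explicit planar graph $G$ that contains no cycle of length $4$ or $5$ and yet admits no proper $3$-coloring. The plan is to obtain $G$ by a \emph{gadget substitution}. Begin with a small non-$3$-colorable planar graph $H$ --- the obvious candidate is $K_4$, which is not $3$-colorable and whose only ``illegal'' feature is that it contains $4$-cycles --- and replace each edge of $H$ by a planar gadget that emulates the constraint ``the two endpoints receive different colors'' in every proper $3$-coloring, but which is long enough, and structured carefully enough, that the substitution destroys the short cycles of $H$ without creating new ones. If such a gadget exists with its two terminals drawable on a common face, the substituted graph inherits non-$3$-colorability from $H$ while, by construction, having no cycle of length $4$ or $5$.

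The main difficulty is the gadget itself, and here one sees at once why the result is hard. A path of length two between two vertices does not force them to differ, so emulating an edge requires denser structure; the natural device is a triangle (permitted, since Steinberg forbids only cycles of lengths $4$ and $5$) with a vertex joined to two of its corners, forcing that vertex to take the color of the third corner. But this configuration already contains a $4$-cycle, and ``stretching'' its edges to kill the $4$-cycle destroys the forcing. So one must instead engineer a more elaborate gadget whose color constraint is enforced at a distance, which contains no $4$- or $5$-cycle internally, and --- crucially --- which introduces no short cycle when two copies are glued at a shared terminal. It may be more convenient to build an \emph{equality} gadget, forcing two terminals to receive equal colors in every proper $3$-coloring while still admitting at least one such coloring, and then to recover the edge constraint by attaching equality gadgets to the corners of a triangle. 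Finding a gadget that meets all of these requirements simultaneously is the crux, and I expect it to require either an ingenious hand construction or a computer search over small planar graphs of the relevant girth profile.

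Once a suitable gadget is in hand, the assembly should be routine: form $G$ by replacing every edge $xy$ of $H$ with a copy of the gadget, identifying its terminals with $x$ and $y$, drawn inside a disk around the original edge so that planarity is preserved. Two verifications then remain. First, that $G$ has no cycle of length $4$ or $5$: any cycle of $G$ either lies inside a single copy of the gadget (excluded by its girth profile, including the behavior at shared terminals) or passes through at least two copies, in which case it is long because each copy contributes several edges between its terminals; one checks the few cases arising from the structure of $H$. Second, that $G$ is not $3$-colorable: restricting any proper $3$-coloring of $G$ to the terminals and applying the forcing property in each copy yields a proper $3$-coloring of $H$, a contradiction. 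The step I expect to be the real obstacle is the gadget construction together with the proof of its forcing property --- certifying that \emph{every} proper $3$-coloring of a moderately large planar fragment obeys a prescribed color relation. Since $3$-colorability of planar graphs is already hard in general, this certification is likely delicate and may ultimately rely on exhaustive checking, as may the final confirmation that $G$ itself is genuinely not $3$-colorable.
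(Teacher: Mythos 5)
This paper does not prove Theorem~\ref{thm:3-color:steinberg:false}; it cites Cohen-Addad, Hebdige, Kr\'{a}l', Li, and Salgado~\cite{CHKLS2016} and uses the result only as background. So there is no internal proof to compare against, and the only thing to assess is the proposal on its own terms.

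Your high-level plan --- gadget substitution into a small non-$3$-colorable seed --- is the right skeleton and resembles the actual construction in~\cite{CHKLS2016}, which builds a planar fragment with no $4$- or $5$-cycles in which three designated boundary vertices are forced to receive the same color in every proper $3$-coloring, and then glues copies together to manufacture a contradiction. You even correctly anticipate that an equality-type gadget attached to a triangle is more workable than a pure inequality gadget, and you correctly identify the subtlety that gluing two copies at a shared terminal must not reintroduce short cycles. But the proposal stops at exactly the crux: no gadget is exhibited, its forcing property is not proved, and the girth verification of the assembled graph is never carried out. You say as much yourself --- ``Finding a gadget that meets all of these requirements simultaneously is the crux.'' As written, every load-bearing claim (that the gadget exists, that it forces the intended color relation, that the final graph avoids $4$- and $5$-cycles, that it is not $3$-colorable) is deferred rather than established. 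That is the gap, and it is not a small one: producing and certifying precisely such a gadget is essentially the entire content of the Cohen-Addad et al.\ paper. What you have is a sensible research plan that points in the right direction, not a proof.
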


In~1990, Erd\H{o}s asked~\cite[Problem~9.2]{steinberg:the-state-of-th:} if there is an integer $k$ such every planar graph without cycles lengths $4$ through $k$ is $3$-colorable. The first answer to Erd\H{o}s's conjecture appeared only a year after he posed it.

\begin{theorem}[Abbott and Zhou~\cite{abbott:on-small-faces-:}]
\label{thm:3-color:lengths:4:11}
Every planar graph without cycles of lengths~$4$ through~$11$ is $3$-colorable.
\end{theorem}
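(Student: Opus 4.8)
The plan is to argue by contradiction and count faces. Suppose, for a contradiction, that some planar graph with no cycles of lengths~$4$ through~$11$ is not $3$-colorable. Passing to a $4$-critical subgraph, which is again planar and again has no cycles of lengths~$4$ through~$11$, we may assume this graph, call it $G$, is itself $4$-critical. Being $4$-critical, $G$ has minimum degree at least~$3$ and is $2$-connected, so in a plane embedding every face is bounded by a cycle of $G$ and hence has length~$3$ or length at least~$12$. Moreover no two triangles of $G$ share an edge, for two triangles $uvw$ and $uvw'$ sharing the edge $uv$ would produce the $4$-cycle $uwvw'$, which $G$ does not contain.

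The heart of the matter is a fact about plane graphs that I would extract from Abbott and Zhou's argument and prove in its own right: \emph{in a $2$-connected plane graph of minimum degree at least~$3$ in which no two triangles share an edge, strictly fewer than $\nicefrac{2}{3}$ of the faces are triangles}. I expect the proof of this lemma --- a discharging argument --- to be the main obstacle. Naive approaches do not suffice: a long face may border arbitrarily many triangular faces, so merely counting incidences between triangular and non-triangular faces gives only the far too weak conclusion that triangular faces are fewer in number than all faces. The essential point is that ``no two \emph{triangles} share an edge'' is strictly stronger than ``no two triangular \emph{faces} share an edge'', and that this, together with minimum degree at least~$3$, forces any dense cluster of triangular faces to carry along enough extra high-degree vertices and non-triangular faces to pay for itself; converting that local abundance into a clean global discharging rule is where the work lies.

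Granting the lemma, the rest is bookkeeping with Euler's formula. Let $n$, $m$, and $f$ be the numbers of vertices, edges, and faces of the embedded $G$; let $t$ be the number of triangular faces; and let $s = f - t$ be the number of the remaining faces, each of length at least~$12$. Summing face lengths gives $2m = \sum_F \ell(F) \ge 3t + 12s$, while Euler's formula in the form $n = m - t - s + 2$, together with $2m = \sum_v \deg(v) \ge 3n$, gives $m \le 3t + 3s - 6$. Hence $3t + 12s \le 2m \le 6t + 6s - 12$, i.e.\ $t \ge 2s + 4$; but the lemma says $t < \tfrac{2}{3}(t+s)$, i.e.\ $t < 2s$, a contradiction.

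I would close with a remark that the same method yields more. The argument is lossy in exactly one place: if $G$ avoids only the cycle lengths~$4$ through~$8$, its non-triangular faces have length at least~$9$, and the computation becomes $3t + 9s \le 2m \le 6t + 6s - 12$, which gives merely $t \ge s + 4$, no contradiction --- minimum degree~$3$ alone does not do better. However, Kostochka and Yancey's resolution of Ore's conjecture for $k = 4$ supplies $m \ge \tfrac{5n-2}{3}$ for every $4$-critical graph, which via $n = m - f + 2$ rearranges to $2m \le 5t + 5s - 8$; using this in place of $2m \le 6t + 6s - 12$, the computation again delivers $t \ge 2s + 4$, contradicting $t < 2s$. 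This is the path to the stronger statement that every planar graph without cycles of lengths~$4$ through~$8$ is $3$-colorable.
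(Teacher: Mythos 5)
Your proposal has a genuine gap: you defer the key lemma---that in a plane graph of minimum degree~$3$ in which no two triangles share an edge, strictly fewer than $\nicefrac{2}{3}$ of the faces are triangles---and you speculate that proving it would require a discharging argument. In fact it does not, and this is precisely where your proposal misses the paper's central idea. The paper isolates this as its Theorem~\ref{thm:no:diamond:two-thirds:triangles} and proves it with a short global count. Letting $n_3$ be the number of degree-$3$ vertices and $e_3$ the number of edges lying in some triangle, one has $f_3 = e_3/3$ since no two triangles share an edge, and $n_3 \geq 4n - 2e$ from the degree sum with minimum degree $3$. Each degree-$3$ vertex $v$ must be incident to at least one edge lying in no triangle, for if all three of its incident edges lay in triangles, two of those triangles would be forced to share an edge through $v$; and each such non-triangle edge can be claimed by at most two degree-$3$ vertices, so $e - e_3 \geq n_3/2$. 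Combining, $f_3 = e_3/3 \leq (e - n_3/2)/3 \leq (2e-2n)/3 = (2f-4)/3 < 2f/3$, the last equality by Euler's formula. Your intuition that ``no two triangles share an edge'' together with minimum degree $3$ forces non-triangle edges to appear in proportion to the triangles is exactly right, but it is realized by this single double-count rather than by any local discharging rule.

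Aside from the missing lemma, your bookkeeping is correct and amounts to the same computation as the paper's. Your framing via $4$-criticality is a mild variant of the paper's induction on the number of vertices (both serve only to secure minimum degree at least~$3$), and your observation that $2$-connectedness makes every face boundary a cycle is a welcome bit of extra care, though the paper's lemma requires only connectedness. Your closing remark on the $4$-through-$8$ case via Kostochka--Yancey matches the paper's second theorem exactly.
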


Abbott and Zhou's proof was at its heart a counting argument.
A series of improvements to Theorem~\ref{thm:3-color:lengths:4:11} have been achieved, all using discharging rather than counting arguments. First, Borodin~\cite{borodin:to-the-paper-of:} proved that it suffices to forbid cycles of lengths $4$ through $10$. Then, Borodin~\cite{Borodin1996} and Sanders and Zhao~\cite{Sanders1995} proved independently that it suffices to forbid cycles of lengths $4$ through $9$. The current state-of-the-art is the following.

\begin{theorem}[Borodin, Glebov, Raspaud, and Salavatipour~\cite{borodin:planar-graphs-w:}]
\label{thm:3-color:lengths:4:7}
Every planar graph without cycles of lengths~$4$ through~$7$ is $3$-colorable.
\end{theorem}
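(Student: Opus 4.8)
The plan is to argue by contradiction through a minimal counterexample, reducing everything to two ingredients: the face-counting inequality that comes out of Euler's formula, and a bound on the proportion of triangular faces.

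Suppose the theorem fails, and let $G$ be a planar graph with no cycles of lengths $4$ through $7$ that is not $3$-colorable, chosen with the fewest vertices and then the fewest edges. Then $G$ is $4$-critical: deleting any edge leaves a graph still in the class with fewer edges, hence $3$-colorable, so $\chi(G)=4$ and every proper subgraph is $3$-colorable. The usual consequences follow: $\delta(G)\ge 3$, since otherwise a $3$-coloring of $G-v$ extends across a vertex $v$ of degree at most $2$; and $G$ is $2$-connected. Fix a plane embedding. As $G$ is $2$-connected, every face boundary is a cycle, so by hypothesis each face has length $3$ or at least $8$; call a face a \emph{triangle} if it has length $3$. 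Finally, two triangles sharing an edge would bound a $4$-cycle, so no two triangular faces of $G$ share an edge --- exactly the hypothesis under which triangles make up strictly less than $\frac{2}{3}$ of the faces.

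Next I would assemble the counting bound. Writing $n$, $e$, $f$ for the numbers of vertices, edges, and faces and $f_3$ for the number of triangular faces, summing face lengths gives $2e\ge 3f_3+8(f-f_3)$, and Euler's formula gives $f=e-n+2$; feeding in an estimate of the form $f_3<\beta f$ produces an upper bound on $e$ that is linear in $n$ and increasing in $\beta$. With $\beta=\frac{2}{3}$, the value the plane-graph fact supplies, this reads $e<\frac{7(n-2)}{4}$, whereas Kostochka and Yancey's theorem for $k=4$ gives $e\ge\frac{5n-2}{3}$ for every $4$-critical graph --- and these contradict each other only for $n$ below an absolute constant (around $34$). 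So the plain counting fact does not finish the job. (It does suffice to forbid cycles of lengths $4$ through $8$: then every nontriangular face has length at least $9$, the bound improves to $e<\frac{5(n-2)}{3}$, and the contradiction with Kostochka--Yancey holds for all $n$.) Getting from $8$ down to $7$ thus amounts to sharpening the constant: it is enough to prove $f_3<\frac{3}{5}f$, since the same arithmetic then gives $e<\frac{5(n-2)}{3}$ and the same contradiction.

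The main obstacle, then, is to prove $f_3<\frac{3}{5}f$ for $G$. This cannot follow from planarity, $\delta(G)\ge 3$, and the no-shared-edge condition alone --- those only give $\frac{2}{3}$, which is essentially best possible for arbitrary plane graphs --- so it must exploit $4$-criticality. The plan is to compile a list of triangle configurations that are reducible for $3$-coloring (triangles incident to too many degree-$3$ vertices, small clusters of triangles joined by short paths, and so on), show that $G$ contains none of them, and then discharge: assign each vertex $v$ the charge $d(v)-4$ and each face $F$ the charge $\ell(F)-4$, for a total of $-8$ by Euler; move charge from large faces and high-degree vertices onto triangles and degree-$3$ vertices; and use the excluded configurations to force every vertex and face to end with nonnegative charge, the desired contradiction. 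Selecting the discharging rules and working through the resulting case analysis --- together with a finite check for the bounded range of small $n$ isolated above --- is where essentially all of the difficulty lies, and it is exactly the content of Borodin, Glebov, Raspaud, and Salavatipour's proof.
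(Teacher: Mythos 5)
The statement you were asked to prove is one the paper itself does not prove: it is cited from Borodin, Glebov, Raspaud, and Salavatipour~\cite{borodin:planar-graphs-w:} as background, and the paper's original contribution is the strictly weaker result that forbidding cycles of lengths $4$ through $8$ suffices. So there is no in-paper proof to compare against.

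Your write-up is, in effect, an accurate diagnosis of exactly this situation rather than a proof. Your arithmetic is correct: with nontriangular faces of length at least $8$, the chain $2e \ge 8f - 5f_3$, $f_3 < \tfrac{2}{3}f$, and Euler's formula gives $e < \tfrac{7}{4}(n-2)$, while Kostochka--Yancey gives $e \ge \tfrac{5n-2}{3}$ for a $4$-critical graph, and these are incompatible only for $n \le 34$, which does not bound a minimal counterexample. Your further observation that forbidding lengths $4$ through $8$ (so nontriangular faces have length at least $9$) improves the bound to $e < \tfrac{5}{3}(n-2)$ and yields a clean contradiction for all $n$ is precisely the content of the paper's own new theorem, and your remark that a hypothetical bound $f_3 < \tfrac{3}{5}f$ would close the $4$-through-$7$ case via the same arithmetic is also correct.

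However, the final paragraph is where the proof would have to be, and there you only gesture at it: you state that one should find reducible configurations exploiting $4$-criticality, set up a discharging argument, and handle a bounded range of small $n$, but no configurations are exhibited, no discharging rules are given, and no verification is carried out. That is not a gap you overlooked --- you say explicitly that this is ``exactly the content'' of the cited paper --- but it does mean the proposal is a roadmap, not a proof. As a referee's note: this is an honest and well-calibrated account of what is and is not achievable by the counting method, and it correctly locates the boundary (lengths $4$--$8$) that this paper's technique actually reaches; it just does not constitute a proof of the stated theorem, nor does the paper you were given offer one.
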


Given that Theorem~\ref{thm:3-color:steinberg:false} shows that forbidding cycles of lengths $4$ and $5$ does not ensure a $3$-coloring, this leaves an open problem.

\begin{openproblem}
If a planar graph does not have cycles of lengths $4$, $5$, or $6$, is it necessarily $3$-colorable?
\end{openproblem}

Our goal in this note is to revisit Abbott and Zhou's proof of Theorem~\ref{thm:3-color:lengths:4:11} and show how combining their approach with a recent theorem of Kostochka and Yancey yields a result nearly as good as Theorem~\ref{thm:3-color:lengths:4:7} with very little effort. We begin by making explicit a result about plane graphs that is hidden in Abbott and Zhou's proof of Theorem~\ref{thm:3-color:lengths:4:11}:

\begin{theorem}
\label{thm:no:diamond:two-thirds:triangles}
If~$G$ is a connected plane graph of minimum degree $3$ in which no two triangles share an edge, then triangles make up strictly less than $\nicefrac{2}{3}$ of its faces.	
\end{theorem}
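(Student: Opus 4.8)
The plan is to use Euler's formula together with a weighting (discharging-style) argument on the faces of $G$.  Let $G$ be a connected plane graph with $n$ vertices, $m$ edges, and $f$ faces, and let $t$ denote the number of triangular faces.  Our goal is to show $t < \nicefrac{2}{3} f$, equivalently $f - t > \nicefrac{1}{3} f$, i.e. that the non-triangular faces number strictly more than half the triangular ones.  First I would record the standard consequence of Euler's formula: summing face lengths gives $2m = \sum_F \ell(F) \ge 3t + 4(f - t) = 4f - t$, so $t \ge 4f - 2m$; combining with $n - m + f = 2$ and the minimum-degree bound $2m \ge 3n$ (hence $n \le \nicefrac{2}{3}m$) yields an inequality relating $f$, $m$ and $t$.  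These crude bounds alone are not enough — they would only give something like $t \le \nicefrac{2}{3}f$ with equality possible — so the no-two-triangles-share-an-edge hypothesis must be exploited.

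The key structural input is that, since no two triangles share an edge, every edge lies on at most one triangular face.  Hence each triangular face is bounded by three edges, each of which is shared with a \emph{non-triangular} face (or the same non-triangular face more than once).  I would set up a charge/counting scheme: assign to each triangular face a charge of $1$, and redistribute it by sending $\nicefrac{1}{3}$ along each of its three bounding edges to the non-triangular face on the other side.  Then each triangular face ends with charge $0$, and a non-triangular face of length $\ell \ge 4$ receives at most $\nicefrac{1}{3}$ across each of its $\ell$ edges, hence ends with charge at most $\nicefrac{\ell}{3}$.  Summing, $t = \sum_{\text{triangles}} 1 = \sum_{\text{non-triangles}} (\text{received}) \le \sum_{F:\ \ell(F)\ge 4} \nicefrac{\ell(F)}{3}$.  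Now $\sum_{F:\ \ell(F)\ge 4} \ell(F) = 2m - 3t$, which gives $t \le \nicefrac{(2m-3t)}{3}$, i.e. $3t \le 2m - 3t$, i.e. $t \le \nicefrac{m}{3}$.  Separately, Euler and the degree bound give $f = 2 - n + m \le 2 - \nicefrac{2m}{3} + m = 2 + \nicefrac{m}{3}$... this needs a lower bound on $f$ in terms of $m$, so I would instead combine $t \le \nicefrac{m}{3}$ with $2m = \sum_F \ell(F) \ge 3f$ only if that helps, and otherwise chase the three relations $t \le \nicefrac{m}{3}$, $n - m + f = 2$, $3n \le 2m$, and $\sum \ell(F) = 2m$ simultaneously to extract $\nicefrac{t}{f} < \nicefrac{2}{3}$.

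The place where I expect the real work to lie is squeezing out the \emph{strict} inequality.  The charging argument above shows $t \le \nicefrac{m}{3}$, but to beat $\nicefrac{2}{3}f$ strictly I need to argue that equality cannot hold throughout: equality in the charging step forces every non-triangular face to have length exactly $4$ and to receive the full $\nicefrac{1}{3}$ across \emph{every} one of its edges, which means every edge of $G$ lies on exactly one triangle and one quadrilateral.  I would then show such a configuration contradicts either connectivity, the minimum-degree hypothesis, or the sum-of-degrees/Euler count — for instance by showing it would force $G$ to be a specific finite graph (or family) for which one checks the ratio directly, or by showing the implied equalities in $2m \ge 3n$ and $\sum \ell(F) = 3t + 4(f-t)$ are jointly infeasible.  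A cleaner route, which I would try first, is to carry a strict inequality through from the start by noting that in a connected plane graph the bound $2m \ge 3n$ and the face-length bound cannot both be tight unless $G$ is $3$-regular with all faces triangles or quadrilaterals, and the no-shared-edge condition then pins down the structure enough to finish.  So: the skeleton is Euler's formula plus an edge-based charging scheme, and the anticipated obstacle is the endgame analysis that converts $\le$ into $<$.
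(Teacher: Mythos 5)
Your charging step is correct but too weak: it establishes only that each edge lies on at most one triangle, i.e.\ $3t = e_3 \le m$ (where $e_3$ is the number of edges in triangular faces), hence $t \le \nicefrac{m}{3}$. Combined with Euler's formula $f = m - n + 2$ and the minimum-degree bound $n \le \nicefrac{2m}{3}$, this gives $f \ge \nicefrac{m}{3} + 2$, so the best you can conclude is
\[
  \frac{t}{f} \;\le\; \frac{m/3}{m/3 + 2} \;=\; \frac{m}{m+6},
\]
which tends to $1$ as $m$ grows. So for $m \ge 12$ this chain of relations does not even give $t \le \nicefrac{2}{3}f$, let alone strictness, and no amount of ``chasing the relations simultaneously'' will fix it. The endgame you anticipated (analyzing when your $\le$'s are tight) is a red herring: the issue is not tightness but that your intermediate bound is simply not sharp enough.

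What is missing is a genuinely two-hypothesis interaction. The minimum-degree-$3$ condition is not just a bookkeeping input for Euler's formula; it combines with the no-shared-edge condition to force a supply of edges that lie in \emph{no} triangle at all. Concretely, if $v$ has degree $3$ and all three edges at $v$ were in triangles, two of those triangles would have to share an edge at $v$, so every degree-$3$ vertex has at least one incident non-triangle edge. Since an edge has two endpoints, this gives $m - e_3 \ge \nicefrac{n_3}{2}$, where $n_3$ is the number of degree-$3$ vertices; and double-counting degrees gives $n_3 \ge 4n - 2m$. Chaining these, $e_3 \le m - \nicefrac{n_3}{2} \le 2m - 2n = 2(f-2)$ by Euler, so $t = \nicefrac{e_3}{3} \le \nicefrac{(2f-4)}{3} < \nicefrac{2f}{3}$. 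Notice the strict inequality falls out automatically from the constant $2$ in Euler's formula; there is no delicate extremal analysis to do. Your proposal never obtains any lower bound on $m - e_3$ beyond zero, which is exactly the gap.
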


\begin{proof}
Let~$G$ be a connected plane graph with~$n$ vertices,~$e$ edges, and $f$ faces. Further let $n_3$ denote the number of degree $3$ vertices in~$G$, let $f_3$ denote the number of triangular faces of~$G$, and let $e_3$ denote the number of edges contained in a triangular face. Note that since no two triangles share an edge, $f_3=e_3/3$. By double counting edges, since the minimum degree of~$G$ is $3$, we have
\[
	2e
	=
	\sum_{v\in V(G)} \deg v
	\ge
	3n_3+4(n-n_3)
	=
	4n-n_3,
\]
so $n_3\ge 4n-2e$.

Now let $v$ be a vertex of degree $3$ in~$G$. Since no edge is contained in two triangles, at least one of the edges incident to $v$ must not be part of a triangle, and so contributes to~$e-e_3$. As this edge might be incident to two vertices of degree $3$, the most we can claim is that~$e-e_3\ge n_3/2$, or after rearranging, $e_3\le e-n_3/2$. Combining this with our inequality on~$n_3$, we have
\[
	f_3
	=
	\frac{e_3}{3}
	\le
	\frac{e-n_3/2}{3}
	\le
	\frac{2e-2n}{3}
	=
	\frac{2f-4}{3},
\]
where the final equality follows by Euler's formula, $f+n=e+2$. This proves the result.
\end{proof}

Theorem~\ref{thm:no:diamond:two-thirds:triangles} quickly leads to a proof of Theorem~\ref{thm:3-color:lengths:4:11}:

\begin{proof}[Proof of Theorem~\ref{thm:3-color:lengths:4:11}]
Let~$G$ be a plane graph with~$n$ vertices,~$e$ edges, and $f$ faces, and without cycles of lengths $4$ through $11$. We prove the result by induction on~$n$, the base case~$n=0$ holding trivially. If~$G$ has a vertex $v$ of degree at most~$2$, then $G-v$ is $3$-colorable by induction, and we may extend such a coloring to $3$-color~$G$. Thus we may assume that the minimum degree of~$G$ is $3$. Similarly, we may assume that~$G$ is connected.

Let $f_3$ denote the number of triangles in~$G$.
No two triangles of~$G$ may share an edge because~$G$ does not contain any $4$-cycles, so $f_3<2f/3$ by Theorem~\ref{thm:no:diamond:two-thirds:triangles}. As every edge lies on two faces and every non-triangular face of~$G$ has at least $12$ edges, the number of non-triangular faces of~$G$ satisfies $f-f_3\le (2e-3f_3)/12$. Thus we have
\begin{equation}
\label{eqn:non-triangle:bound}
\tag{$\dagger$}
	f
	\le
	f_3+
	\frac{2e-3f_3}{12}
	=
	\frac{e}{6}+\frac{3f_3}{4}
	<
	\frac{e}{6}+\frac{f}{2},
\end{equation}
so $f<e/3$. By Euler's formula we have $e=n+f-2$, so
\begin{equation}
\label{eqn:euler:ending}
\tag{$\ddagger$}
	e
	=
	n+f-2
	<
	n+\frac{e}{3}-2,
\end{equation}
and thus $e<3n/2-3$. This proves that~$G$ has average degree less than $3$. Therefore, it must contain a vertex of degree at most $2$, and so can be $3$-colored by induction.
\end{proof}

If cycles of length $11$ are allowed, then the inequality in \eqref{eqn:non-triangle:bound} must be changed to
\[
	f
	\le
	f_3+\frac{2e-3f_3}{11}
	=
	\frac{2e}{11}+\frac{8f_3}{11}
	<
	\frac{2e}{11}+\frac{16f}{33}.
\]
This implies that $f<6e/17$, so~\eqref{eqn:euler:ending} becomes
\[
	e
	=
	n+f-2
	<
	n+\frac{6e}{17}-2,
\]
which implies that $e<17n/11-34/11$. This is not enough to guarantee a vertex of degree at most~$2$, and so the argument used by Abbott and Zhou cannot be used to prove a result stronger than Theorem~\ref{thm:3-color:lengths:4:11}.

There is, however, a different way to use Theorem~\ref{thm:no:diamond:two-thirds:triangles} to prove a result about $3$-coloring planar graphs without cycles. A graph is \emph{$k$-critical} if it has chromatic number $k$, but all of its induced subgraphs have chromatic number strictly less than $k$. Kostochka and Yancey~\cite{kostochka:ores-conjecture:} recently nearly resolved Ore's conjecture on the minimum number of edges in a $k$-critical graph. They also gave~\cite{kostochka:ores-conjecture:k4} a short and self-contained proof in the case $k=4$, where the result reduces to the following.

\begin{theorem}[Kostochka and Yancey~\cite{kostochka:ores-conjecture:k4,kostochka:ores-conjecture:}]
\label{thm:ore:k=4}
If~$G$ is a $4$-critical graph with~$n$~vertices and~$e$~edges, then
\[
	e\ge\frac{5n-2}{3}.
\]
\end{theorem}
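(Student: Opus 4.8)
After clearing denominators, the asserted inequality reads $5|V(G)|-3|E(G)|\le 2$, so the plan is to establish this ``potential'' bound for every $4$-critical graph~$G$ by the potential-function method. For $S\subseteq V(G)$ write $\rho_G(S)=5|S|-3\|G[S]\|$, where $\|G[S]\|$ is the number of edges of~$G$ with both ends in~$S$; the target is exactly $\rho_G(V(G))\le 2$, and it is sharp because $\rho_{K_4}\big(V(K_4)\big)=20-18=2$. So I would suppose, for contradiction, that $G$ is a $4$-critical graph with $\rho_G(V(G))\ge 3$ on the fewest possible vertices, and work toward a contradiction.

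First I would use $4$-criticality to clean up~$G$. If~$G$ had a vertex~$v$ of degree at most~$2$, then the proper induced subgraph $G-v$ would be $3$-colorable---every proper induced subgraph of a $4$-critical graph is---and that coloring would extend to~$v$, contradicting $\chi(G)=4$; hence $\delta(G)\ge 3$. A similar analysis excludes a cut set of size at most~$2$ unless it already spans many edges, since otherwise~$G$ splits along the cut set into strictly smaller graphs whose potentials, combined with the minimality of~$G$, force $\rho_G(V(G))\le 2$ outright.

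The crux is a lower bound on the potential of \emph{every} proper vertex subset: a threshold~$t$ for which $\rho_G(S)\ge t$ whenever $2\le |S|\le |V(G)|-1$ (with a slightly larger value unless $G[S]$ has a prescribed special form). To prove this I would take a smallest proper subset~$S$ of least potential and replace the complementary part $V(G)\setminus S$ by a small gadget chosen so that: the new graph~$G^{*}$ is still not $3$-colorable, because $G$ is not and the gadget faithfully records the coloring constraints that $V(G)\setminus S$ imposes on the boundary of~$S$; $G^{*}$ has strictly fewer vertices than~$G$; and $\rho_{G^{*}}(V(G^{*}))$ is controlled in terms of~$\rho_G(S)$. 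Applying the minimality of~$G$ to a $4$-critical subgraph of~$G^{*}$ then forces $\rho_G(S)\ge t$. (Running this induction on subsets smoothly likely requires proving the bound for a somewhat larger class, namely graphs that are ``critical'' relative to a precoloring or list constraint.) With every proper subset known to carry large potential, a final counting argument---balancing the low potential of~$G$ against the forced high potential of its proper subsets and the bound $\delta(G)\ge 3$---delivers $\rho_G(V(G))\le 2$, the desired contradiction.

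The main obstacle is the gadget construction in the third step: the replacement for $V(G)\setminus S$ must be engineered so that non-$3$-colorability genuinely survives, the vertex count strictly drops, and the potential accounting stays tight enough for the concluding count to close with no slack---which is precisely where the coefficient~$\tfrac53$ of the theorem gets pinned down.
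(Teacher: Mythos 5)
The paper does not prove this theorem; it is quoted as a black box from Kostochka and Yancey, so the comparison has to be against their proof in~\cite{kostochka:ores-conjecture:k4}. Your sketch does correctly identify that proof's machinery: the potential $\rho_G(S)=5|S|-3\|G[S]\|$, the reformulation of the bound as $\rho_G(V(G))\le 2$ with $K_4$ attaining equality, the choice of a vertex-minimal $4$-critical counterexample, the observation $\delta(G)\ge 3$, and the program of first proving a uniform lower bound on the potential of every proper subset and then closing with a counting/discharging step. That is genuinely Kostochka and Yancey's strategy.

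The step that would fail as written is the gadget construction, and the failure is not just a detail---you have the roles of $S$ and $V(G)\setminus S$ reversed. In Kostochka--Yancey, one takes a proper subset $R$ of small potential and of minimum size among such; since $G$ is $4$-critical and $R$ is proper, $G[R]$ has a proper $3$-coloring $\phi$, and one forms $G'$ by \emph{identifying each color class of $R$} to a single vertex (so $R$ itself is replaced by a gadget $Y$ with $|Y|\le 3$), keeping $V(G)\setminus R$ intact and inheriting the boundary edges. Non-$3$-colorability of $G'$ is then automatic: any $3$-coloring of $G'$, read on $V(G)\setminus R$ and combined with $\phi$ on $R$, would $3$-color $G$. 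Moreover $|V(G')|=n-|R|+|Y|<n$ once $|R|\ge 4$ (small $|R|$ is handled separately), and one can compute directly that $\rho_{G'}(W)\ge\rho_G\bigl((W\setminus Y)\cup R\bigr)-\rho_G(R)+\mathrm{const}$ for $W\supseteq Y$, which is exactly the accounting that turns minimality of $G$ into a lower bound on $\rho_G(R)$. Your version instead keeps $S$ and replaces $V(G)\setminus S$ by a gadget that is supposed to ``faithfully record the coloring constraints'' the complement imposes on the boundary of $S$. There is no such bounded-size gadget in general---the set of colorings of $\partial S$ that extend across $V(G)\setminus S$ can be an essentially arbitrary subset of $3^{|\partial S|}$---and even if one existed, the potential bookkeeping would not isolate $\rho_G(S)$ the way the contraction identity above does. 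So the ``main obstacle'' you flag is real, but it is resolved by collapsing the small-potential set itself via one of its own $3$-colorings, not by compressing its complement.
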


Kostochka and Yancey~\cite{kostochka:ores-conjecture:k4} showed how Theorem~\ref{thm:ore:k=4} leads to a very short proof of Gr\"{o}tsch's celebrated three color theorem (every triangle-free planar graph is $3$-colorable). Borodin, Kostochka, Lidick\'{y}, and Yancey~\cite{borodin:short-proofs-of:} later showed how Theorem~\ref{thm:ore:k=4} can also be used to give a short proof of Gr\"unbaum's three color theorem (every planar graph with at most three triangles is $3$-colorable). Below, we use Theorem~\ref{thm:ore:k=4} together with the bound on triangles given by Theorem~\ref{thm:no:diamond:two-thirds:triangles} to derive a result nearly as good as Theorem~\ref{thm:3-color:lengths:4:7}.

\begin{theorem}
Every planar graph without cycles of lengths $4$ through $8$ is $3$-colorable.
\end{theorem}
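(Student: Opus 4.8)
The plan is to follow the template of the preceding proof of Theorem~\ref{thm:3-color:lengths:4:11}, but to replace its closing step --- the production of a vertex of degree at most $2$ --- with an appeal to Theorem~\ref{thm:ore:k=4}, in the same spirit as the short derivations of the theorems of Gr\"{o}tzsch and Gr\"{u}nbaum from Theorem~\ref{thm:ore:k=4}. So I would argue by contradiction. If some planar graph without cycles of lengths $4$ through $8$ is not $3$-colorable, then it contains a $4$-critical subgraph $H$, and $H$ is itself planar with no cycles of lengths $4$ through $8$. Fix a plane drawing of $H$ with $n$ vertices, $e$ edges, and $f$ faces. The first point to record is that $H$, being $4$-critical, is connected, $2$-connected, and of minimum degree at least $3$; in particular every face of $H$ is bounded by a cycle, and hence --- since $H$ has no cycle of length $4$ through $8$ --- every face has either $3$ edges or at least $9$ edges.

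Next I would apply Theorem~\ref{thm:no:diamond:two-thirds:triangles} to $H$: since $H$ has no $4$-cycle, no two of its triangles share an edge, so the number $f_3$ of triangular faces satisfies $f_3 < 2f/3$. Counting incidences between edges and faces, with each edge on exactly two faces, gives
\[
	2e
	=
	\sum_F \ell(F)
	\ge
	3f_3 + 9(f - f_3)
	=
	9f - 6f_3
	>
	9f - 4f
	=
	5f,
\]
so $f < 2e/5$, and combining this with Euler's formula in the form $f = e - n + 2$ yields $e < (5n - 10)/3$. On the other hand, Theorem~\ref{thm:ore:k=4} applied to the $4$-critical graph $H$ gives $e \ge (5n - 2)/3$, which exceeds $(5n - 10)/3$ --- a contradiction that completes the proof.

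I do not anticipate a genuine obstacle: granted Theorems~\ref{thm:no:diamond:two-thirds:triangles} and~\ref{thm:ore:k=4}, the argument is a few lines of arithmetic, and the slack between $(5n-2)/3$ and $(5n-10)/3$ shows the inequality closes with room to spare. The one spot deserving care is the structural reduction at the outset --- that the counterexample may be taken $4$-critical, and is therefore connected, $2$-connected, and of minimum degree $3$ --- since this is exactly what makes the hypotheses of Theorem~\ref{thm:no:diamond:two-thirds:triangles} valid and what guarantees that each non-triangular face spans at least $9$ edges.
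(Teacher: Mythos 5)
Your proposal is correct and follows essentially the same route as the paper: pass to a $4$-critical (minimal) counterexample, apply Theorem~\ref{thm:no:diamond:two-thirds:triangles} to bound $f_3 < 2f/3$, count edge--face incidences using that non-triangular faces have length at least $9$ to get $f < 2e/5$, feed this through Euler's formula to obtain $e < (5n-10)/3$, and contradict Theorem~\ref{thm:ore:k=4}. Your version is slightly more careful in spelling out that $4$-criticality gives connectedness, $2$-connectedness, and minimum degree $3$ (and hence that faces are bounded by cycles), points the paper leaves implicit.
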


\begin{proof}
Suppose that the result is not true and take~$G$ to be a plane graph of minimal order, say~$n$, that is not $3$-colorable despite having no cycles of lengths $4$ through $8$.
Let~$e$ denote the number of edges of~$G$ and $f$ denote the number of faces.
As it is a minimal counterexample,~$G$ must be $4$-critical, so we have $e\ge 5n/3-2/3$ by Theorem~\ref{thm:ore:k=4}. Let $f_3$ denote the number of triangles in~$G$; again we have $f_3<2f/3$ by Theorem~\ref{thm:no:diamond:two-thirds:triangles}. As the shortest non-triangular faces of~$G$ have length $9$, the inequality \eqref{eqn:non-triangle:bound} in our proof of Theorem~\ref{thm:3-color:lengths:4:11} becomes
\[
	f
	\le
	f_3+\frac{2e-3f_3}{9}
	=
	\frac{2e}{9}+\frac{2f_3}{3}
	<
	\frac{2e}{9}+\frac{4f}{9}.
\]
This implies that $f<2e/5$, so by applying Euler's formula, the inequality~\eqref{eqn:euler:ending} becomes
\[
	e
	=
	n+f-2
	<
	n+\frac{2e}{5}-2.
\]
However, this shows that $e<5n/3-10/3$, which contradicts the fact that $e\ge 5n/3-2/3$.
\end{proof}

\section*{Acknowledgements}
We thank Zachary Hunter for discovering a mistake in a previous version of this paper.

%
%
%
%
%
%
%


\end{document}